\newcommand{\mc}[1]{\mathcal{#1}}
\newtheorem{theorem}{Theorem}
\newtheorem{lemma}{Lemma}
\newtheorem{assumption}{Assumption}
\title{Solving PDE problems with uncertainty using neural-networks}
\author{ Yuehaw Khoo\thanks{Department of Mathematics, Stanford
    University, Stanford, CA 94305, USA (\texttt{ykhoo@stanford.edu}).}
  \and Jianfeng Lu \thanks{Department of Mathematics, Department of
    Chemistry and Department of Physics, Duke University, Durham, NC
    27708, USA (\texttt{jianfeng@math.duke.edu}).}  \and Lexing
  Ying\thanks{Department of Mathematics and ICME, Stanford University,
    Stanford, CA 94305, USA (\texttt{lexing@stanford.edu}).}  }
\begin{document}

\maketitle

\begin{abstract}
	The curse of dimensionality is commonly encountered in numerical partial differential equations
	(PDE), especially when uncertainties have to be modeled into the equations as random
	coefficients. However, very often the variability of physical quantities derived from a PDE can be
	captured by a few features on the space of the coefficient fields. Based on such an observation,
	we propose using a neural-network (NN) based method to parameterize the physical quantity of
	interest as a function of input coefficients. The representability of such quantity using a
	neural-network can be justified by viewing the neural-network as performing time evolution to find
	the solutions to the PDE.  We further demonstrate the simplicity and accuracy of the approach
	through notable examples of PDEs in engineering and physics.
\end{abstract}

\section{Introduction}
Uncertainty quantifications in physical and engineering applications often involve the study of
partial differential equations (PDE) with random coefficient field. To understand the behavior of a
system in the presence of uncertainties, one can extract PDE-derived physical quantities as
functionals of the coefficient fields. This can potentially require solving the PDE an exponential
number of times numerically even with a suitable discretization of the PDE domain, and of the range
of random variables. Fortunately in most PDE applications, often these functionals depend only on a
few characteristic ``features'' of the coefficient fields, allowing them to be determined from
solving the PDE a limited number of times.

A commonly used approach for uncertainty quantifications is Monte-Carlo sampling. An ensemble of
solutions is built by repeatedly solving the PDE with different realizations of the coefficient
field. Then physical quantities of interest, for example, the mean of the solution at a given
location, can be computed from the ensemble of solutions. Although being applicable in many
situations, the computed quantity is inherently noisy. Moreover, this method lacks the ability to
obtain new solutions if they are not sampled previously. Other approaches exploit the low underlying
dimensionality assumption in a more direct manner. For example the stochastic Galerkin method
\cite{matthies2005galerkin,stefanou2009stochastic} expands the random solution using certain
prefixed basis functions (i.e. polynomial chaos \cite{wiener1938homogeneous,xiu2002wiener}) on the
space of random variables, thereby reducing the high dimensional problem to a few deterministic
PDEs. Such type of methods requires careful treatment of the uncertainty distributions, and since
the basis used is problem independent, the method could be expensive when the dimensionality of the
random variables is high. There are data-driven approaches for basis learning such as applying
Karhunen-Lo\`eve expansion to PDE solutions from different realizations of the PDE
\cite{cheng2013data}. Similarly to the related principal component analysis, such linear
dimension-reduction techniques may not fully exploit the nonlinear interplay between the random
variables. At the end of day, the problem of uncertainty quantification is one of characterizing the
low-dimensional structure of the coefficient field that gives the observed quantities.


On the other hand, the problem of dimensionality reduction has been central to the fields of
statistics and machine learning. The fundamental task of regression seeks to find a function
$h_\theta$ parameterized by a parameter vector $\theta\in\mathbb{R}^p$ such that
\begin{equation}
\label{ls error}
f(a) \approx h_\theta(a),\ a\in\mathbb{R}^q.
\end{equation}
However, choosing a sufficiently large class of approximation functions without the issue of
over-fitting remains a delicate business, for example when choosing the set of basis $\{\phi_k(a)\}$
such that $f(a) = \sum_{k} \beta_k \phi_k(a)$ in linear regression. In the last decade, deep
neural-networks have demonstrated immense success in solving a variety of difficult regression
problems related to pattern recognitions
\cite{hinton2006reducing,lecun2015deep,schmidhuber2015deep}. A key advantage of using neural-network
is that it bypasses the traditional need to handcraft basis for spanning $f(a)$ as in linear
regression but instead, directly learns an approximation that satisfies (\ref{ls error}) in a
data-driven way. The performance of neural-network in machine learning applications, and more
recently in physical applications such as representing quantum many-body states
(e.g. \cite{carleo2017solving,torlai2016learning}), encourages us to study its use in the context of
solving PDE with random coefficients. More precisely, we want to learn $f(a)$ that maps the random
coefficient vector $a$ in a PDE to some physical quantities described by the PDE.

Our approach to solving quantities arise from PDE with randomness is
conceptually simple, consisting of the following steps:
\begin{itemize}
	\item Sample the random coefficients ($a$ in (\ref{ls error})) of the PDE from a user-specified
	distribution. For each set of coefficients, solve the deterministic PDE to obtain the physical
	quantity of interest ($f(a)$ in (\ref{ls error})).
	\item Use a neural-network as the surrogate model $h_{\theta}(a)$ in (\ref{ls error}) and train it
	using the previously obtained samples.
	\item Validate the surrogate forward model with more samples. The neural network is now ready for
	applications.
\end{itemize}
Though being a simple method, to the best of our knowledge, dimension reduction based on
neural-network representation has not been adapted to solving PDE with uncertainties. We demonstrate
the success of neural-network in two important PDEs that have wide applications in physics and
engineering. In particular, we consider solving for the effective conductance in inhomogeneous media
and the ground state energy of a nonlinear Schr\"odinger equation (NLSE) having inhomogeneous
potential. These quantities are $f$ in \eqref{ls error} that we want to learn as a function of $a$,
where $a$ is the random conductivity coefficient or the random potential. The main contributions of
our work are
\begin{itemize}
	\item We provide theoretical guarantees on neural-network representation of $f(a)$ through explicit
	construction for the parametric PDE problems under study;
	\item We show that even a rather simple neural-network architecture can learn a good representation
	of $f(a)$ through training.
\end{itemize}
We note that our work is different from \cite{E2018,khoo2018solving,
	lagaris1998artificial,long2017pde,rudd2015constrained}, which solve
deterministic PDE numerically using a neural-network. The goal of
these works is to parameterize the solution of a deterministic PDE
using neural-network and use optimization methods to solve for the PDE
solution. It is also different from \cite{han2017deep} where a
deterministic PDE is solved as a stochastic control problem using
neural-network. In this paper, the function that we want to
parameterize is over the coefficient field of the PDE.

The advantages of having an explicitly parameterized approximation to $f(\cdot)$ are numerous, which
we will only list a couple here. First, the neural-network parameterized function can serve as a
surrogate forward model for generating samples cheaply for statistical analysis. Second, the task of
optimizing some function of the physical quantity with respect to the PDE coefficients in
engineering design problems can be done with the help of a gradient calculated from the
neural-network. To summarize, obtaining a neural-network parametrization could limit the use of
expensive PDE solvers in applications.

The paper is organized as followed. In Section \ref{section: two examples}, we provide background on
the two PDEs of interest. In Section \ref{section: theory}, the theoretical justification of using
NN to represent the physical quantities derived from the PDEs introduced in Section \ref{section:
	two examples}, is provided. In Section \ref{section: framework}, we describe the neural-network
architecture for handling these PDE problems and report the numerical results. We finally conclude
in Section \ref{section: conclusion}.

\section{Two examples of parametric PDE problems}
\label{section: two examples}
This section introduces the two PDE models -- the linear elliptic equation \cite{larsson2008partial}
and the nonlinear Schr\"odinger equation \cite{kato1989nonlinear} -- we want to solve for. Elliptic
equations are commonly used to study steady heat conduction in a given material. When the material
has inhomogeneities (modeled as random conductivity coefficients in the elliptic equation), one
typically wants to understand the effective conductivity of the material. NLSE is used to understand
light propagation in waveguide and also the quantum mechanical phenomena where bosonic particles
highly concentrate in the lowest-energy state (Bose-Einstein condensation). We want to study how the
energy of such ground state behaves when the NLSE is subjected to random potential field. Therefore,
we focus on the map from the coefficient field of these PDEs to their relevant physical
quantities. In both of the PDEs, the boundary condition is taken to be periodic for simplicity.

\subsection{Effective coefficients for inhomogeneous elliptic equation}\label{section: random elliptic}
Our first example will be finding the effective conductance/coefficient in a non-homogeneous
media. For this, we consider a class of coefficient functions
\begin{equation}
\label{coef assumption}
\mathcal{A} = \{ a \in L^{\infty}([0, 1]^d) \mid \lambda_1 \geq a(x) \geq \lambda_0 > 0 \},
\end{equation}
for some fixed constants $\lambda_0$ and $\lambda_1$. Fix a direction $\xi \in \mathbb{R}^d$ with
$\|\xi\|_2=1$ ($\| \cdot \|_2$ is the Euclidean norm). We want to obtain the effective conductance
functional $A_{\mathrm{eff}}:\mathcal{A}\rightarrow \mathbb{R}$ defined by
\begin{equation}
\label{effective conductance}
A_{\mathrm{eff}}(a) = \min_{u(x)} \int_{[0,1]^d} a(x)\| \nabla u(x)+\xi \|_2^2 \, \mathrm{d} x.
\end{equation}
The minimizer $u_a(x)$ of this variational problem (here the subscript ``$a$'' in $u_a$ is used to
denote its dependence on the coefficient field $a$) satisfies the following elliptic partial
different equation
\begin{equation}
\label{random elliptic}
- \nabla \cdot \left( a(x) (\nabla u(x)+\xi)\right) = 0,\quad x\in[0,1]^d
\end{equation}
with periodic boundary condition. With $u_a$ available, one obtains
\[
A_{\mathrm{eff}}(a) = \int_{[0,1]^d} a(x)\| \nabla u_a(x)+\xi\|_2^2 \, \mathrm{d} x.
\]
In practice, to parameterize ${A}_{\mathrm{eff}}$ as a functional of the coefficient field
$a(\cdot)$, we discretize the domain using a uniform grid with step size $h=1/n$ and grid points
denoted by $x_i = ih$, where the multi-index
$i\in \{(i_1,\ldots,i_d)\}, 1\le i_1,\ldots,i_d\le n$. In this way, we can think about the
coefficient field $a(x)$ and the solution $u(x)$ represented on the grid points both as vectors with
length $n^d$. More precisely, we redefine
\[
\mathcal{A} = \{a\in\mathbb{R}^{n^d} \mid a_i \in [\lambda_0, \lambda_1], \, \forall i\}
\]
and discretize the term $-\nabla \cdot \left(a(x) \nabla u(x)\right)$ using central difference
\begin{equation}
- \sum_{k=1}^d \frac{a_{i+e_k/2}(u_{i+e_k}-u_{i}) - a_{i-e_k/2} (u_i-u_{i-e_k})}{h^2},
\end{equation}
for each $i$, where $\{e_k\}_{k=1}^d$ denotes the canonical basis in $\mathbb{R}^d$ and each value
of $a$ at a half grid point is obtained by averaging the values at its two nearest grid points. Then
the discrete version of \eqref{random elliptic} is the linear system $L_a u = b_a$ with
\begin{align}\label{discrete elliptic}
&(L_a u)_i:=\sum_{k=1}^d \frac{-a_{i+e_k/2}u_{i+e_k}  + (a_{i-e_k/2}+a_{i+e_k/2})u_i - a_{i-e_k/2}u_{i-e_k}}{h^2}\\
&(b_a)_i  :=\sum_{k=1}^d  \frac{\xi_k (a_{i+e_k/2} - a_{i-e_k/2} )}{h}.
\end{align}
From \eqref{effective conductance}, one can see that the discrete version of the effective
conductivity, also denoted by $A_{\mathrm{eff}}(a)$ for $a\in\mathbb{R}^{n^d}$, can be obtained from
solving the discrete variational problem
\begin{equation}\label{effective conductance strong} 
A_{\mathrm{eff}}(a) = 2 \min_{u\in\mathbb{R}^{n^d}} \mc{E}(u; a),\quad
\mc{E}(u; a):=\frac{h^d}{2} (u^{\top} L_a u - 2 u^{\top} b_a +  a^\top \mathbf{1}),
\end{equation}
or equivalently, solving $u_a$ from $L_au = b_a$ and setting
\[
A_{\mathrm{eff}}(a)=h^d (u_a^{\top} L_a u_a - 2 u_a^{\top} b_a +  a^\top \mathbf{1}).
\]

We stress that in order to simplify the notations, we use $u$ and $a$ as vectors in
$\mathbb{R}^{n^d}$, although they were previously used as functions in \eqref{effective
	conductance}. The interpretation of $u$ and $a$ as functions or vectors should be clear from the
context.

\subsection{NLSE with inhomogeneous background potential}\label{section: random NLSE}
For the second PDE example, we want to find the ground state energy $E_0$ of a nonlinear Schr\"odinger
equation with potential $a(x)$:
\begin{equation} 
\label{NLSE}
-\Delta u(x) + a(x) u(x) + \sigma u(x)^3 = E_0 u(x),\quad x\in[0,1]^d, \\
\text{s.t.}\ \int_{[0,1]^d} u(x)^2 \, \mathrm{d} x= 1.
\end{equation}
We take $\sigma = 2$ in this work and thus consider a defocusing cubic Schr\"odinger equation, which
can be understood as a model for soliton in nonlinear photonics or Bose-Einstein condensate with
inhomogeneous media. Similar to \eqref{discrete elliptic}, we solve the discretized version of the
NLSE
\begin{equation}
\label{discrete NLSE}
(L u)_i + a_i u_i + \sigma u_i^3 = E_0 u_i,\quad
\sum_{i=1}^{n^d} u_i^2 h^d= 1,\quad  (Lu)_i :=\sum_{k=1}^d \frac{-u_{i+e_k} + 2u_i -u_{i-e_k}}{h^2}.
\end{equation}

Due to the nonlinear cubic term, it is more difficult to solve for the NLSE numerically compare to
(\ref{random elliptic}). Therefore in this case, the value of having a surrogate model of $E_0$ as a
function of $a$ is more significant. We note that the solution $u$ to \eqref{discrete NLSE} (and
thus $E_0$) can also be obtained from the following variational problem
\begin{equation}
\min_{u\in\mathbb{R}^{n^d}:\, \|u\|_2^2 = n^d} u^\top L u + u^\top \text{diag}(a) u +  \frac{\sigma}{2} \sum_i u_i^4,
\end{equation}
where the $\text{diag}(\cdot)$ operator forms a diagonal matrix given a vector.

\section{Theoretical justification of deep neural-network representation}
\label{section: theory}

The physical quantities introduced in Section \ref{section: two examples} are determined through the
solution of the PDEs given the coefficient field. Rather than solving the PDE, we will prove that
the map from coefficient field to such quantities can be represented using convolutional NNs. The
main idea is to view the solution $u$ of the PDE as being obtained via time evolution, where each
layer of the NN corresponds to the solution at discrete time step. In other words, mapping the input
$a$ from the first layer to last layer in the NN resembles the time-evolution of a PDE with discrete
time-steps. We focus here on the case of solving elliptic equations with inhomogeneous coefficients.
Similar line of reasoning can be used to demonstrate the representability of the ground state-energy
$E_0$ as a function of $a$ using an NN.

\begin{theorem}
	\label{theorem: effective conductance}
	Fix an error tolerance $\epsilon > 0$, there exists a neural-network $h_\theta(\cdot)$ with
	$O(n^d)$ hidden nodes per-layer and
	$O((\frac{\lambda_1}{\lambda_0}+1)\frac{n^{2}}{\epsilon})$ layers such that for
	any $a\in\mathcal{A}=\{a\in\mathbb{R}^{n^d} \mid a_i \in [\lambda_0, \lambda_1],\,\forall i\}$, we
	have
	\begin{equation}
	\lvert h_\theta(a)  - A_{\mathrm{eff}}(a) \rvert \leq \epsilon \lambda_1.
	\end{equation} 
\end{theorem}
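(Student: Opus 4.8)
The plan is to express $A_{\mathrm{eff}}(a)$ as the output of an iterative minimization of $\mathcal{E}(\cdot\,;a)$ and to let each layer of $h_\theta$ carry out one iteration, so that depth plays the role of discrete time. By \eqref{effective conductance strong} we have $A_{\mathrm{eff}}(a)=2\min_u \mathcal{E}(u;a)$, where $\mathcal{E}(\cdot\,;a)$ is a convex quadratic whose minimizer $u_a$ solves $L_a u=b_a$. I would therefore run the gradient-descent / forward-Euler recursion
\[
u^{(t+1)}=u^{(t)}-\eta\,(L_a u^{(t)}-b_a),\qquad u^{(0)}=0,
\]
with a fixed step size $\eta\sim 1/\mu_{\max}(L_a)$, and set $h_\theta(a)=2\,\mathcal{E}(u^{(T)};a)$ after $T$ steps. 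This is precisely the ``time-evolution toward steady state'' picture: the recursion is the forward-Euler discretization of the gradient flow $\dot u=-(L_a u-b_a)$, whose equilibrium is $u_a$.

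First I would verify that one iteration is realizable by a single layer of width $O(n^d)$. By \eqref{discrete elliptic} the maps $u\mapsto L_a u$ and $a\mapsto b_a$ are local finite-difference stencils with $O(1)$ nonzeros per row, i.e. they have convolutional structure, while the hidden state merely transports $a$ unchanged and updates $u^{(t)}$. The one genuinely nonlinear feature is that $L_a u$ contains the products $a_{i\pm e_k/2}\,u_{i\pm e_k}$, so the update is \emph{bilinear} in the hidden variables $(a,u^{(t)})$, and the readout additionally needs the quadratic form $u^\top L_a u$. Representing these products is the main obstacle: they must be produced either by multiplicative/quadratic gates or approximated by the chosen activation, and this must be arranged while keeping the per-layer width at $O(n^d)$ and leaving $a$ available at every layer.

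The remaining ingredient is the depth bound, which I would obtain by combining the smooth-convex convergence rate with two spectral/energy estimates. For the quadratic $\mathcal{E}$, the $O(1/t)$ rate yields $\mathcal{E}(u^{(T)};a)-\mathcal{E}(u_a;a)=O\!\big(h^d\,\mu_{\max}(L_a)\,\|u_a\|_2^2/T\big)$, so the $A_{\mathrm{eff}}$-error is at most a constant times this. For the two factors I would use: (i) $\mu_{\max}(L_a)\le\lambda_1\,\mu_{\max}=O(\lambda_1 n^2)$, from the quadratic-form comparison $u^\top L_a u\le \lambda_1\,u^\top(-\Delta_h)u$ and the known top eigenvalue $\mu_{\max}=O(n^2)$ of the unweighted periodic discrete Laplacian $-\Delta_h$; and (ii) the discrete corrector estimate $\|u_a\|_2^2=O\!\big((\lambda_1/\lambda_0)\,n^d\big)$, obtained from the $\lambda_0$-coercivity of $L_a$, the bound $A_{\mathrm{eff}}(a)\le\lambda_1$, and a discrete Poincar\'e inequality on the complement of constants (recall $L_a\mathbf{1}=0$ under periodic boundary conditions). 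Substituting $h^d=n^{-d}$ together with (i) and (ii) gives an $A_{\mathrm{eff}}$-error of $O\!\big((\lambda_1/\lambda_0)\,\lambda_1 n^2/T\big)$, so $T=O\!\big((\lambda_1/\lambda_0+1)\,n^2/\epsilon\big)$ drives it below $\epsilon\lambda_1$; since every estimate is uniform over $a\in\mathcal{A}$, the claim follows. The analytic crux is estimate (ii): it is what converts the naive condition-number count into the stated dependence on $\lambda_1/\lambda_0$ and $n^2/\epsilon$, and together with the bilinear-representation issue above it constitutes the heart of the argument.
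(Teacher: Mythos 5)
Your proposal follows essentially the same route as the paper: unroll gradient descent on the variational energy $\mathcal{E}(\cdot\,;a)$ into the layers of a convolutional network (with $a$ carried along as an extra channel), and combine the $O(1/T)$ convex convergence rate with the spectral bound $\|L_a\|_2=O(\lambda_1 n^2)$ and the estimate $\|u_a\|_2^2=O((\lambda_1/\lambda_0)n^d)$ obtained from coercivity and the discrete Poincar\'e inequality on the mean-zero subspace, yielding the stated depth. The one place the paper goes further is exactly the bilinear-representation issue you flag but leave unresolved: instead of assuming each layer implements the gradient step exactly, the paper models the per-layer approximation error as a perturbation $\varepsilon^{m+1}$ satisfying $\|\varepsilon^{m+1}\|_2\le c\,\|\nabla\mathcal{E}(v^m)\|_2$ and $\mathbf{1}^\top\varepsilon^{m+1}=0$, and reruns the descent and $O(1/M)$ convergence analysis for this inexact iteration to show the same depth bound survives.
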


Note that due to the ellipticity assumption $a \in \mathcal{A}$, the effective conductivity is
bounded from below by $A_{\mathrm{eff}}(a) \geq \lambda_0 > 0$. Therefore the theorem immediately
implies a relative error bound
\begin{equation}
\frac{\lvert h_\theta(a)  - A_{\mathrm{eff}}(a) \rvert}{A_{\mathrm{eff}}(a)} \leq \epsilon \frac{\lambda_1}{\lambda_0}.
\end{equation}

We illustrate the main idea of the proof in the rest of the section,
the technical details of the proof are deferred to the supplementary
materials.

The first observation is that, due to the variational characterization \eqref{effective conductance
	strong}, in order to get $A_{\mathrm{eff}}(a)$ we may minimize $\mc{E}(u; a)$ over the solution
space $u$, using e.g., steepest descent:
\begin{equation}\label{discrete time heat equation}
\begin{aligned}
u^{m+1} & = u^{m} - \Delta t \frac{\partial \mc{E}(u^m; a)}{\partial u}\\
& = u^{m} - \Delta t \bigl(L_a u^m - b_{a} \bigr),
\end{aligned}
\end{equation}
where $\Delta t$ is a step size chosen sufficiently small to ensure descent of the energy. Note that
the optimization problem is convex due to the ellipticity assumption \eqref{coef assumption} of the
coefficient field $a$ (which ensures $u^\top L_a u>0$ except for $u=\mathbf{1}$) with Lipshitz
continuous gradient, therefore the iterative scheme converges to the minimizer with proper choice of
step size for any initial condition. Thus we can choose $u^0 = 0$.


Now we identify the iteration scheme in \eqref{discrete time heat
	equation} with a convolutional NN architecture (Fig. \ref{figure:
	time_evol}) by viewing $m$ as an index of the NN layers. The input
of the NN is the vector $a\in \mathbb{R}^{n^d}$, and the hidden layers
are used to map between the consecutive pairs of $(d+1)$-tensors
$U^m_{i_0i_1\ldots i_{d}}$ and $U^{m+1}_{i_0i_1\ldots i_{d}}$.  The
zeroth dimension for each tensor $U^m$ is the channel dimension and
the last $d$ dimensions are the spatial dimensions. If we let the
channels in each $U^m$ be consisted of a copy of $a$ and a copy of
$u^m$, e.g., let
\begin{equation}
U^m_{0 i_1\ldots i_{d}} = a_{(i_1,\ldots, i_{d})},\quad U^m_{1 i_1\ldots i_{d}} = u^m_{(i_1,\ldots, i_{d})},
\end{equation}
in light of \eqref{discrete time heat equation} and \eqref{discrete elliptic}, one simply needs to
perform local convolution (to aggregate $a$ locally) and nonlinearity (to approximate quadratic form
of $a$ and $u^m$) to get from $U^m_{1i_1\ldots i_{d}} = u^m_{(i_1,\ldots, i_{d})} $ to
$U^{m+1}_{1i_1\ldots i_{d}} = u^{m+1}_{(i_1,\ldots, i_{d})} $; while the $0$-channel is simply
copied to carry along the information of $a$. Stopping at $m=M$ layer and letting $u^M$ be the
approximate minimizer of $\mc{E}(u;a)$, based on \eqref{effective conductance strong}, we let
$\mc{E}(u^M;a)$ be an approximation to $A_{\mathrm{eff}}(a)$. This architecture of NN to approximate
the effective conductance is illustrated in Fig. \ref{figure: time_evol}. Note that the architecture
of NN used in the proof resembles a deep ResNet \cite{he2016deep}, as the coefficient field $a$ is
passed from the first to the last layer. The detailed estimates of the approximation error and the
number of parameters will be deferred to the supplementary materials.

Let us point out that if we take the continuum time limit of the steepest descent dynamics, we
obtain a system of ODE
\begin{equation}
\partial_t u = -(L_a u-b_{a}), 
\end{equation}
which can be viewed as a spatially discretized PDE. Thus our construction of the neural network in
the proof is also related to the work \cite{long2017pde} where multiple layers of convolutional NN
is used to learn and solve evolutionary PDEs. However, the goal of the neural network here is to
approximate the physical quantity of interest as a functional of the (high-dimensional) coefficient
field, which is quite different from the view point of \cite{long2017pde}.

\begin{figure}
	\centering
	\includegraphics[width=0.7\textwidth]{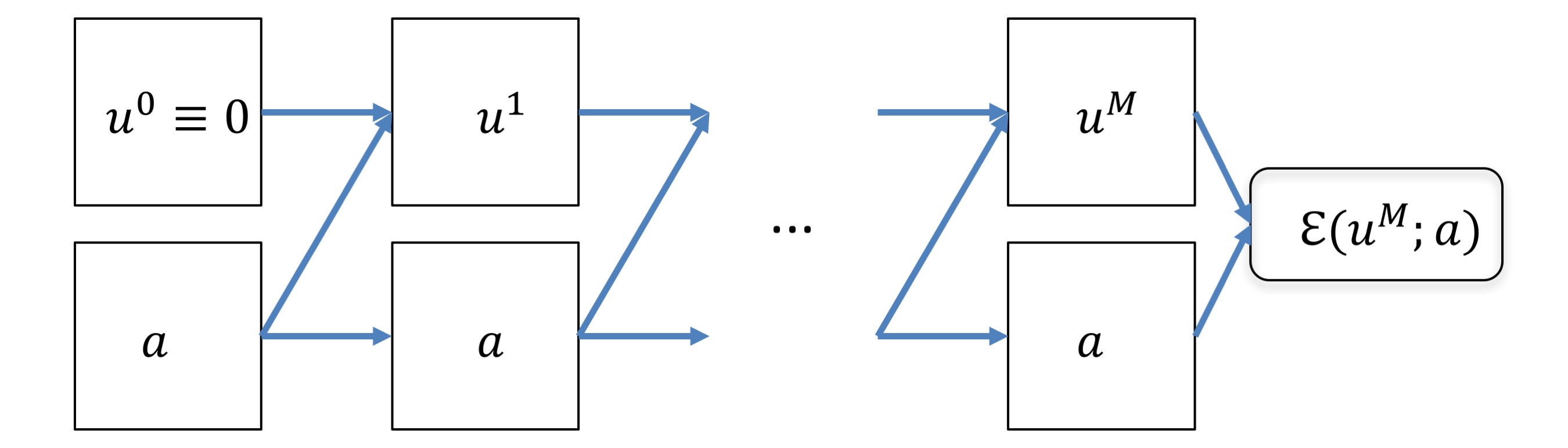}
	\caption{Construction of the NN in the proof of Theorem
		\ref{theorem: effective conductance}. The NN takes the coefficient
		field $a$ as an input and the convolutional and local nonlinearity
		layers are used to map from $u^m, a$ to $u^{m+1},a$, $m=0,M-1$. At
		$u_M$, local convolutions and nonlinearity are used to obtained
		$\mc{E}(u^M;a)$.}\label{figure: time_evol}
\end{figure}

We also remark that the number of layers of the NN required by
Theorem~\ref{theorem: effective conductance} is rather large. This is
due to the choice of the (unconditioned) steepest descent algorithm as
the engine of optimization to generate the neural network architecture
used in the proof. Nevertheless, the number of parameters required in
the NN representation is still much fewer than the exponential scaling
in terms of the input dimension of a shallow network \cite{Barron1993}
from universal approximation theorem. Moreover, with a better
preconditioner such as the algebraic multigrid \cite{xu2017algebraic}
for the gradient, we can effectively reduce the number of layers to
$O(1)$ and thus achieves an optimal count of parameters involved in
the NN; the details will be left for future works. In practice, as
shown in the next section by actual training of parametric PDEs, the
neural network architecture can be much simplified while maintaining a
good approximation to the quantity of interest.

\section{Proposed network architecture and numerical results}
\label{section: framework}
In this section, based on the discussion in Section \ref{section: theory}, we propose using
convolutional NN to approximate the physical quantities given by the PDE with a periodic boundary
condition. We first describe the architecture of the neural network in Section \ref{section: archi},
then the implementation details and numerical results are provided in Section \ref{section:
	implementation} and \ref{section: numerics} respectively.

\subsection{Architecture} \label{section: archi}

In Fig. \ref{figure: onelayer}, we show the architecture for the 2D case with domain being a unit
square, though it can be generalized to solving PDEs in any dimensions. The input to the NN is an $n\times n$
matrix representing the coefficient field $a\in \mathbb{R}^{n^2}$ on grid points, and the
output of the network gives physical quantity of interest from the PDE. The main part of the network
are convolutional layers with ReLU being the nonlinearity. This extracts the relevant features of
the coefficient field around each grid point that contribute to the final output. The use of
a sum-pooling followed by a linear map to obtain the final output is based on the translational
symmetry of the function $f(\cdot)$ to be represented. More precisely, let
$a^{\tau_1\tau_2}_{ij}:=a_{(i+\tau_1)(j+\tau_2)}$ where the additions are done on
$\mathbb{Z}_n$. The output of the convolutional layer gives basis functions that satisfy
\begin{equation}
\label{TI filter}
\tilde\phi_{kij} (a^{\tau_1 \tau_2}) = \tilde\phi_{k(i-\tau_1)(j-\tau_2)}(a) ,\quad k=1,\ldots,\alpha, \  i,j=1,\ldots,n, \ \forall \tau_1,\tau_2=1,\ldots,n.
\end{equation}
When using the architecture in Fig. \ref{figure: onelayer}, for any $\tau_1, \tau_2$,
\begin{eqnarray}
\label{trans symmetry}
f(a^{\tau_1 \tau_2}) &=& \sum_{k=1}^\alpha \beta _k \sum_{i=1}^n \sum_{j=1}^n \bigg( \tilde
\phi_{kij}(a^{\tau_1\tau_2})\bigg) = \sum_{k=1}^\alpha \beta _k \sum_{i=1}^n \sum_{j=1}^n \bigg( \tilde
\phi_{k(i-\tau_1)(j-\tau_2)}(a)\bigg) \cr
&=& \sum_{k=1}^\alpha \beta _k \phi_k(a),\quad
\phi_k:=\sum_{i=1}^n \sum_{j=1}^n \tilde \phi_{kij},
\end{eqnarray}
where $\beta_k$'s are the weights of the last densely connected layer. The summation over $i,j$
comes from the sum-pooling operation. Therefore, \eqref{trans symmetry} shows that the translational
symmetry of $f$ is preserved.

We note that all operations in Fig. \ref{figure: onelayer} are standard except the padding
operation. Typically, zero-padding is used to enlarge the size of the input in image classification
task, whereas we extend the input periodically due to the assumed periodic boundary condition.

\begin{figure}
	\centering
	\includegraphics[width=0.7\textwidth]{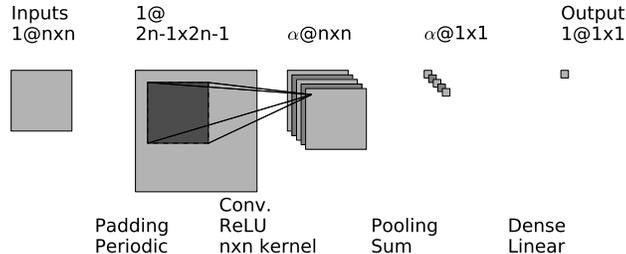}
	\caption{Single convolutional layer neural network for representing translational invariant
		function. }\label{figure: onelayer}
\end{figure}

\subsection{Implementation}\label{section: implementation}
The neural-network is implemented using Keras \cite{chollet2017keras}, an application programming
interface running on top of TensorFlow \cite{abadi2016tensorflow} (a library of toolboxes for
training neural-network). We use a mean-squared-error loss function. The optimization is done using
the NAdam optimizer \cite{dozat2016incorporating}. The hyper-parameter we tune is the learning rate,
which we lower if the training error fluctuates too much. The weights are initialized randomly from
the normal distribution. The input to the neural-network is whitened to have unit variance and
zero-mean on each dimension. The mini-batch size is always set to between 50 and 200.

\subsection{Numerical examples} \label{section: numerics}
\subsubsection{Effective conductance}
For the case of effective conductance, we assume the entries $a_i$'s of $a\in \mathbb{R}^{n^d}$ are
independently and identically distributed according to $\mathcal{U}[0.3, 3]$ where
$\mathcal{U}[\lambda_0,\lambda_1]$ denotes the uniform distribution on the interval
$[\lambda_0,\lambda_1]$. The results of learning the effective conductance function are presented in
Table \ref{table: homogenization}. To get the training samples, we solve the linear system in
\eqref{discrete elliptic}. We use the same number of samples for training and validation.  Both the
training and validation error are measured by
\begin{equation}
\sqrt{\frac{\sum_k (h_\theta(a^k) - A_{\mathrm{eff}}(a^k))^2}{\sum_k A_{\mathrm{eff}}(a^k)^2}},
\end{equation}
where $a^k$'s can either be the training or validation samples sampled from the same distribution
and $h_\theta$ is the neural-network-parameterized approximation function. We remark that although
incorporating domain knowledge in PDE to build a sophisticated neural-network architecture would
likely boost the approximation quality, such as what we do in the constructive proof for Theorem
\ref{theorem: effective conductance}, our results in Table \ref{table: homogenization} show that
even a simple network as in Fig. $\ref{figure: onelayer}$ can already give decent results with near
$10^{-3}$ accuracy. The simplicity of the NN is particularly important when using it as a surrogate
model of the PDE to generate samples cheaply.


\begin{table}[ht]
	\caption{Error in approximating the effective conductance function $A_{\mathrm{eff}}(a)$ in 2D. The
		mean and standard deviation of the effective conductance are computed from the samples in
		order to show the variability. The sample sizes for training and validation are the
		same.}\label{table: homogenization}
	\centering 
	\begin{tabular}{c c c c c c c c} 
		\hline\hline 
		$n$ & $\alpha$ &\begin{tabular}{@{}c@{}}Training \\ error\end{tabular} & \begin{tabular}{@{}c@{}}Validation \\ error\end{tabular}  & Average $A_{\mathrm{eff}}$ & \begin{tabular}{@{}c@{}}No. of \\ samples\end{tabular} & \begin{tabular}{@{}c@{}} No. of \\ parameters\end{tabular} \\ [.5ex] 
		\hline\hline 
		8 & 16 & $2.4\times 10^{-3}$ & $3.0\times 10^{-3}$ & $1.86\pm 0.10$ & $1.2\times 10^4$ & 1057 \\
		16 & 16 & $2.1\times 10^{-3}$ & $2.2\times 10^{-3}$ & $1.87\pm 0.052$ & $2.4\times 10^4$ &  4129 \\ [.5ex] 
		\hline 
	\end{tabular}
\end{table}

Before concluding this subsection, we use the exercise of determining the effective conductance in
1D to provide another motivation for the usage of a neural-network. Unlike the 2D case, in 1D the
effective conductance can be expressed analytically as the harmonic mean of $a_i$'s:
\begin{equation}
\label{harmonic mean}
A_{\mathrm{eff}}(a) = \biggl(\frac{1}{n}\sum_{i=1}^{n} \frac{1}{a_i} \biggr)^{-1}.
\end{equation}
This function indeed approximately corresponds to the deep neural-network shown in Fig. \ref{figure:
	homo1D}. The neural-network is separated into three stages. In the first stage, the approximation
to function $1/a_i$ is constructed for each $a_i$ by applying a few convolution layers with size $1$
kernel window. In this stage, the channel size for these convolution layers is chosen to be 16
except the last layer since the output of the first stage should be a vector of size $n$. In the
second stage, a layer of sum-pooling with size $n$ window is used to perform the summation in
(\ref{harmonic mean}), giving a scalar output. The third and first stages have the exact same
architecture except the input to the third stage is a scalar. $2560$ samples are used for training
and another $2560$ samples are used for validation. We let $a_i\sim \mathcal{U}[0.3, 1.5]$, giving
an effective conductance of $0.77\pm0.13$ for $n=8$. We obtain $4.9\times 10^{-4}$ validation error
with the neural-network in Fig. \ref{figure: homo1D} while with the network in Fig. \ref{figure:
	onelayer}, we get $5.5\times 10^{-3}$ accuracy with $\alpha=16$. As a check, in Fig.~\ref{figure:
	recip} we show that the output from the first stage is well-fitted by the reciprocal function.
\begin{figure}
	\centering
	\includegraphics[trim=0 80bp 0 60bp,clip,width=0.75\textwidth]{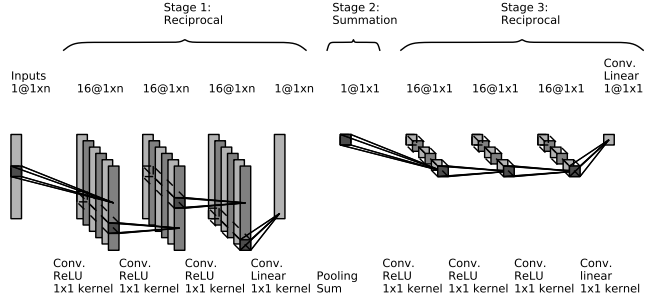}
	\caption{Neural-network architecture for approximating $A_{\mathrm{eff}}(a)$ in the 1D
		case. Although the layers in third stage are essentially densely-connected layers, we still
		identify them as convolution layers to reflect the symmetry between the first and third
		stages.}\label{figure: homo1D}
\end{figure}

\begin{figure}
	\centering
	\includegraphics[width=0.5\textwidth]{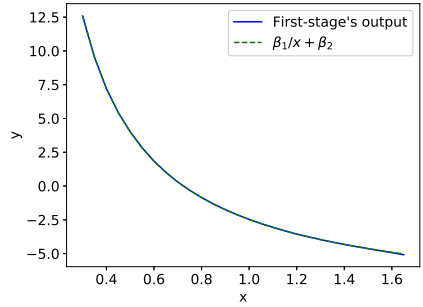}
	\caption{The first stage's output of the neural-network in Fig. \ref{figure: homo1D} fitted by
		$\beta_1/x + \beta_2$. The training result agrees well with the analytical structure of the solution to the 1D effective conductance.}\label{figure: recip}
\end{figure}

\subsubsection{Ground state energy of NLSE}
We next focus on the 2D NLSE example \eqref{discrete NLSE} with $\sigma=2$. The goal here is to
obtain a neural-network parametrization for $E_0(a)$, with input now being $a\in \mathbb{R}^{n^2}$
with i.i.d. entries distributed according to $\mathcal{U}[1, 16]$. As mentioned before, solving for
the ground state energy in NLSE is more expensive than solving for the effective conductance as in
this case, we need to solve a system of nonlinear equations. In order to generate training samples,
for each realization of $a$, the nonlinear eigenvalue problem \eqref{discrete NLSE} is solved via a
homotopy method. In such method, a sequence of NLSE $L u + a_i u_i + s u_i^3 = E_0 u_i\ \forall i$
with the normalization constraint on $u$ is solved with $s = s_1,\ldots, s_K$ where
$0=s_1<s_2<\ldots<s_K=\sigma$. First, the case $s=0$ is solved as a standard eigenvalue
problem. Then for each $s_i$ with $i>1$, Newton's method is used to solved the NLSE and $u,E_0$
obtained with $s = s_i$ will be used to warm start the Newton's iteration for $s_{i+1}$. In our
example, we change $s$ from 0 to 2 with a step size equals to $0.4$. The results are presented in
Table \ref{table: NLSE}.

\begin{table}[ht]
	\centering 
	\caption{Error in approximating the lowest energy level $E_0(a)$ for $n=8,16$ discretization.}\label{table: NLSE} 
	\begin{tabular}{c c c c c c c c} 
		\hline\hline 
		$n$ & $\alpha$ &\begin{tabular}{@{}c@{}}Training \\ error\end{tabular} & \begin{tabular}{@{}c@{}}Validation \\ error\end{tabular}  &  Average $E_0$  & \begin{tabular}{@{}c@{}}No. of \\ samples\end{tabular} & \begin{tabular}{@{}c@{}} No. of \\ parameters\end{tabular} \\ [0.5ex] %
		\hline\hline 
		8 & 5 &$4.9\times 10^{-4}$ & $5.0\times 10^{-4}$ & $10.48\pm 0.51$ & $4800 $ & 331\\
		16 & 5 & $1.5\times 10^{-4}$ & $1.5\times 10^{-4}$ & $10.46\pm 0.27$ & $1.05\times 10^4$ & 1291\\ [1ex] 
		\hline 
	\end{tabular}
\end{table}

\section{Conclusion}
\label{section: conclusion}

In this note, we present method based on deep neural-network to solve PDE with inhomogeneous
coefficient fields. Physical quantities of interest are learned as a function of the coefficient
field. Based on the time-evolution technique for solving PDE, we provide theoretical motivation to
represent these quantities using an NN. The numerical experiments on elliptic equation and NLSE show
the effectiveness of simple convolutional neural network in parameterizing such function to
$10^{-3}$ accuracy. We remark that while many questions should be asked, such as what is the best
network architecture and what situations can this approach handle, the goal of this short note is
simply to suggest neural-network as a promising tool for model reduction when solving PDEs with
uncertainties.

\bibliographystyle{plain}
\bibliography{bibref}

\newpage

\appendix
\section*{Supplementary material: Proof of representability of effective conductance by NN}
As mentioned previously in Section \ref{section: theory}, the first step of constructing an NN to
represent the effective conductance is to perform time-evolution iterations in the form of
\eqref{discrete time heat equation}. However, since at each step we need to approximate the map from
$u^m$ to $u^{m+1}$ in \eqref{discrete time heat equation} using NN, the process of time-evolution is
similar to applying noisy gradient descent on $\mc{E}(u;a)$. More precisely, after performing a step
of gradient descent update, the NN approximation incur noise to the update, i.e.
\begin{eqnarray}
\label{noisy heat equation}
v^0 = u^0 =0,\quad u^{m+1} = v^m - \Delta t \nabla \mc{E}(v^m),\quad v^{m+1} = u^{m+1} + \Delta t
\varepsilon^{m+1}.
\end{eqnarray}
Here $\mc{E}(u;a)$ is abbreviated as $\mc{E}(u)$, and $\varepsilon^{m+1}$ is the error for each
layer of the NN in approximating each exact time-evolution iteration $u^{m+1} $. To be sure, instead
of $u^m$, the object that is evolving in the NN as $m$ changes is $v^m$.

\begin{assumption}
	\label{assumption: spectral bound}
	We assume $a \in \mathcal{A} = \{ a \in\mathbb{R}^{n^d} \mid a_i \in [\lambda_0,
	\lambda_1],\,\forall i\}$ with $\lambda_0 >0$. Under this assumption $\lambda_a := \| L_a \|_2$
	and $\mu_a:= \frac{1}{\| L_a^\dagger \|_2}$ satisfy
	\begin{equation} 
	\label{assumptions}
	\lambda_a = O(\lambda_1 h^{d-2}),\quad \mu_a = \Omega(\lambda_0 h^d).
	\end{equation}
	Here for matrices $\|\cdot \|_2$ denotes the spectral norm, $h=1/n$.
\end{assumption}

\begin{assumption}
	We assume the NN results an approximation error term $\varepsilon^{m+1}$ with properties
	\begin{equation}
	\label{assumptions 2}
	\| \varepsilon^{m+1} \|_2\leq c\|\nabla \mc{E}(v^m) \|_2, \quad {\mathbf{1}}^\top
	\varepsilon^{m+1}=0,\quad m = 0,\ldots M-1,
	\end{equation}
	when approximating each step of time-evolution.
\end{assumption}

\begin{lemma}
	\label{lemma: sufficient descent}
	The iterations in \eqref{noisy heat equation} satisfies
	\begin{equation}
	\mc{E}(v^{m+1}) - \mc{E}(v^m) \leq - \frac{\Delta t}{2}\|\nabla \mc{E}(v^m) \|_2^2,
	\end{equation}
	if $\Delta t \leq \delta, \delta = \big(1 - \frac{1}{2(1-c)}\big)\frac{2}{\lambda_a'}$ with
	$\lambda_a' = (1+\frac{c^2}{1-c})\lambda_a$. Furthermore,
	\begin{eqnarray}
	\label{grad bound}
	\frac{\Delta t}{2} \sum_{m=0}^{M-1} \| \nabla \mc{E}(v^{m+1}) \|_2^2 \leq  \mc{E}(v^0)-\mc{E}(v^M)\leq \mc{E}(v^0)-\mc{E}(u^*).
	\end{eqnarray}
\end{lemma}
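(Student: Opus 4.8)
The plan is to read \eqref{noisy heat equation} as a perturbed gradient descent on the convex quadratic energy $\mc{E}(u)=\tfrac12 u^\top L_a u - u^\top b_a + \mathrm{const}$ and to run the standard descent-lemma argument, tracking the effect of the approximation error $\varepsilon^{m+1}$ throughout. Because $\mc{E}$ is exactly quadratic with Hessian $L_a$, the second-order Taylor expansion is an identity rather than an inequality,
\begin{equation}
\mc{E}(v^{m+1}) = \mc{E}(v^m) + \langle \nabla\mc{E}(v^m),\, v^{m+1}-v^m\rangle + \tfrac12 (v^{m+1}-v^m)^\top L_a (v^{m+1}-v^m),
\end{equation}
and the quadratic remainder is controlled by $\tfrac{\lambda_a}{2}\|v^{m+1}-v^m\|_2^2$ since $\lambda_a=\|L_a\|_2$ by Assumption~\ref{assumption: spectral bound}. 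First I would substitute the update rule $v^{m+1}-v^m = -\Delta t\,(\nabla\mc{E}(v^m)-\varepsilon^{m+1})$, which turns the estimate of $\mc{E}(v^{m+1})-\mc{E}(v^m)$ into a question of how much the noise $\varepsilon^{m+1}$ spoils the clean steepest-descent decrement $-\Delta t\,\|\nabla\mc{E}(v^m)\|_2^2$.

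Writing $g=\nabla\mc{E}(v^m)$, expanding the substituted expression produces a first-order part $-\Delta t\|g\|_2^2+\Delta t\langle g,\varepsilon^{m+1}\rangle$ together with a quadratic part bounded by $\tfrac{\lambda_a\Delta t^2}{2}\|g-\varepsilon^{m+1}\|_2^2$. The decisive ingredients are the relative-error hypothesis $\|\varepsilon^{m+1}\|_2\le c\|g\|_2$ from \eqref{assumptions 2} and Cauchy--Schwarz, which together render every cross term $\langle g,\varepsilon^{m+1}\rangle$ and every $\|\varepsilon^{m+1}\|_2^2$ proportional to $\|g\|_2^2$. I would then apply a Young-type splitting to the cross terms, tuned so that the quadratic contribution is absorbed into an effective smoothness constant $\lambda_a'=(1+\tfrac{c^2}{1-c})\lambda_a$; collecting terms leaves a per-step bound of the schematic form $-\Delta t\,\kappa\|g\|_2^2+\tfrac{\lambda_a'\Delta t^2}{2}\|g\|_2^2$. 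Requiring this to be at most $-\tfrac{\Delta t}{2}\|g\|_2^2$ becomes a scalar inequality in $\Delta t$ whose solution is precisely $\Delta t\le\delta$ with $\delta=(1-\tfrac{1}{2(1-c)})\tfrac{2}{\lambda_a'}$; note that $\delta>0$ forces $c<\tfrac12$. I expect this constant-balancing --- getting the Young parameter and the bookkeeping to land exactly on $\lambda_a'$ and $\delta$ rather than on a cruder threshold --- to be the main obstacle, the remainder being routine. The orthogonality condition $\mathbf{1}^\top\varepsilon^{m+1}=0$ (paired with $\mathbf{1}^\top g=0$, which holds since $L_a\mathbf{1}=0$ and $\mathbf{1}^\top b_a=0$ by periodicity) serves to keep every iterate in the subspace orthogonal to $\ker L_a=\mathrm{span}(\mathbf{1})$, where $\mc{E}$ is genuinely coercive.

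For the second assertion \eqref{grad bound}, I would simply telescope the per-step inequality over $m=0,\dots,M-1$: summing $\mc{E}(v^{m+1})-\mc{E}(v^m)\le-\tfrac{\Delta t}{2}\|\nabla\mc{E}(v^m)\|_2^2$ collapses the left-hand side to $\mc{E}(v^M)-\mc{E}(v^0)$ and yields $\tfrac{\Delta t}{2}\sum_m\|\nabla\mc{E}(v^m)\|_2^2\le\mc{E}(v^0)-\mc{E}(v^M)$. The final inequality then follows from $\mc{E}(v^M)\ge\min_u\mc{E}(u)=\mc{E}(u^*)$, since $u^*$ is the global minimizer of the convex quadratic.
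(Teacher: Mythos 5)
Your proposal is correct and follows essentially the same route as the paper: the descent-lemma expansion with Hessian bound $\lambda_a=\|L_a\|_2$, substitution of the noisy update, Cauchy--Schwarz with the relative-error bound $\|\varepsilon^{m+1}\|_2\le c\|\nabla\mc{E}(v^m)\|_2$ to reduce everything to multiples of $\|\nabla\mc{E}(v^m)\|_2^2$, the same effective constant $\lambda_a'$ and threshold $\delta$, and telescoping plus $\mc{E}(v^M)\ge\mc{E}(u^*)$ for \eqref{grad bound}. Your observations that $\delta>0$ forces $c<\tfrac12$ and that the telescoped sum naturally involves $\|\nabla\mc{E}(v^m)\|_2^2$ (rather than the index $m+1$ as written in \eqref{grad bound}) are both accurate side remarks consistent with how the bound is actually used later.
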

\begin{proof}
	From Lipshitz property of $\nabla \mc{E}(u)$ \eqref{assumptions},
	\begin{eqnarray}
	\mc{E}(v^{m+1}) - \mc{E}(v^m) &\leq& \langle \nabla \mc{E}(v^m), v^{m+1}-v^m \rangle + \frac{\lambda_a}{2} \|  v^{m+1} - v^m \|_2^2	\cr
	& = & \langle  \nabla \mc{E}(v^m), v^m - \Delta t (\nabla \mc{E}(v^m) + \varepsilon^{m+1})-v^m \rangle \cr 
	&\ & +  \frac{\lambda_a}{2} \|v^m - \Delta t (\nabla \mc{E}(v^m) + \varepsilon^{m+1})-v^m\|_2^2 \cr
	& = & -\Delta t (1 - \frac{\Delta t \lambda_a}{2}) \|\nabla \mc{E}(v^m)\|_2^2 \cr 
	&\ &+ \Delta t(1 - \frac{\Delta t \lambda_a }{2})\langle \varepsilon^{m+1}, \nabla \mc{E}(v^m)\rangle + \frac{\lambda_a \Delta t^2}{2} \| \varepsilon^m \|_2^2\cr
	&\leq& -\Delta t (1 - \frac{\Delta t \lambda_a}{2}) \|\nabla \mc{E}(v^m)\|_2^2  \cr 
	&\ &+ c\Delta t (1 - \frac{\Delta t \lambda_a }{2} +  \frac{c \Delta t \lambda_a }{2}) \|\nabla \mc{E}(v^m)\|_2^2\cr
	&=& -\Delta t \big((1 - c) - (1-c+c^2) \frac{\Delta t \lambda_a}{2} \big) \|\nabla \mc{E}(v^m)\|_2^2\cr
	&=& -\Delta t (1-c) \big(1 - \frac{1-c+c^2}{1-c}\frac{\Delta t \lambda_a}{2} \big) \|\nabla \mc{E}(v^m)\|_2^2\cr
	&=& -\Delta t (1-c) \big(1 - \frac{\Delta t \lambda_a'}{2} \big) \|\nabla \mc{E}(v^m)\|_2^2.
	\end{eqnarray}
	Letting $\Delta t \leq \big(1 - \frac{1}{2(1-c)}\big)\frac{2}{\lambda_a'}$, we get
	\begin{equation}
	\mc{E}(v^{m+1}) - \mc{E}(v^m) \leq  -\frac{\Delta t}{2} \|\nabla \mc{E}(v^m)\|_2^2.
	\end{equation}
	Summing the LHS and RHS gives \eqref{grad bound}. This concludes the lemma.
\end{proof}

\begin{theorem}
	If $\Delta t$ satisfies the condition in Lemma \ref{lemma: sufficient descent}, given any
	$\epsilon>0$, $\vert \mc{E}(v^M) - \mc{E}(v) \vert \leq \epsilon$ for $M =
	O((\frac{\lambda_1^2}{\lambda_0}+\lambda_1)\frac{n^2}{\epsilon})$.
\end{theorem}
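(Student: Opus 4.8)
The plan is to turn the averaged-gradient estimate \eqref{grad bound} into a bound on the optimality gap $\mc{E}(v^M) - \mc{E}(u^*)$, where $u^*$ denotes a minimizer of $\mc{E}(\cdot\,;a)$ and I read the ``$\mc{E}(v)$'' in the statement as this optimal value. Since $\mc{E}(\cdot\,;a)$ is quadratic with $\nabla\mc{E}(v) = L_a v - b_a$, the first step is to record a Polyak--{\L}ojasiewicz inequality. The only null direction of $L_a$ is $\mathbf{1}$, and because $\mathbf{1}^\top b_a = 0$ (the entries of $b_a$ telescope to zero under the periodic boundary condition) the gradient always lies in the range of $L_a$. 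Writing $v - u^* = w_\parallel + w_\perp$ with $w_\parallel\in\mathrm{span}(\mathbf{1})$, this gives
\begin{equation}
\|\nabla\mc{E}(v)\|_2^2 = w_\perp^\top L_a^2 w_\perp \ge \mu_a\, w_\perp^\top L_a w_\perp = 2\mu_a\bigl(\mc{E}(v)-\mc{E}(u^*)\bigr),
\end{equation}
with $\mu_a$ the smallest nonzero eigenvalue of $L_a$ from Assumption \ref{assumption: spectral bound}; this holds for every $v$, so no separate invariant-subspace argument is required.

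Next I would combine this with the two outputs of Lemma \ref{lemma: sufficient descent}: the monotone decrease $\mc{E}(v^{m+1})\le\mc{E}(v^m)$ and the summed bound $\frac{\Delta t}{2}\sum_{m=0}^{M-1}\|\nabla\mc{E}(v^{m+1})\|_2^2 \le \mc{E}(v^0)-\mc{E}(u^*)$. The summed bound forces $\min_m\|\nabla\mc{E}(v^{m+1})\|_2^2 \le 2(\mc{E}(v^0)-\mc{E}(u^*))/(\Delta t\,M)$; applying the PL inequality at the minimizing index and then using monotonicity to pass to the final iterate yields
\begin{equation}
\mc{E}(v^M)-\mc{E}(u^*) \le \frac{\mc{E}(v^0)-\mc{E}(u^*)}{\mu_a\,\Delta t\,M}.
\end{equation}
A recursive contraction of the gap $\mc{E}(v^m)-\mc{E}(u^*)$ would in fact give the stronger $\log(1/\epsilon)$ rate, but the cruder min-over-iterates route above suffices for the stated $1/\epsilon$ bound and uses only \eqref{grad bound}.

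Finally I would estimate the three quantities. From the step-size condition in Lemma \ref{lemma: sufficient descent}, $\Delta t = \Theta(1/\lambda_a)$, so $1/(\mu_a\Delta t) = \Theta(\lambda_a/\mu_a)$, and Assumption \ref{assumption: spectral bound} gives $\lambda_a/\mu_a = O(\lambda_1 n^2/\lambda_0)$ once the powers of $h$ cancel ($h^{d-2}/h^d = n^2$). Since $v^0 = 0$, the initial gap is $\mc{E}(v^0)-\mc{E}(u^*)\le \mc{E}(0) = \tfrac{h^d}{2}a^\top\mathbf{1} \le \tfrac12\lambda_1$, using $h^d n^d = 1$, $a_i\le\lambda_1$, and $\mc{E}(u^*)=\tfrac12 A_{\mathrm{eff}}(a)>0$. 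Substituting and requiring the right-hand side to be at most $\epsilon$ gives $M = O\bigl(\frac{\lambda_a}{\mu_a}\cdot\frac{\lambda_1}{\epsilon}\bigr) = O\bigl(\frac{\lambda_1^2}{\lambda_0}\frac{n^2}{\epsilon}\bigr)$, exactly the claimed count; the additive $\lambda_1 n^2/\epsilon$ term is dominated since $\lambda_1\le\lambda_1^2/\lambda_0$.

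The main obstacle is the rank deficiency of $L_a$: the energy is strongly convex only transverse to $\mathbf{1}$, so the real work is confirming that the correct PL modulus is the spectral gap $\mu_a$ rather than $0$, and that the dynamics never excite the null direction --- guaranteed here by $u^0\perp\mathbf{1}$, $\mathbf{1}^\top b_a=0$, and the constraint $\mathbf{1}^\top\varepsilon^{m+1}=0$ in Assumption 2. The remainder is bookkeeping of the $h$-powers in Assumption \ref{assumption: spectral bound}; I would double-check that the $h^d$ prefactors in $\mc{E}$, in $L_a$, and in the gradient $L_a u - b_a$ are normalized consistently, so that they cancel between $\Delta t$, $\mu_a$, and the initial gap and leave the clean $n^2$ factor.
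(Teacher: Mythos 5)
Your proof is correct, but it takes a genuinely different route from the paper's. The paper runs the classical analysis for \emph{merely convex} gradient descent: it combines the convexity inequality $\mc{E}(u^*)-\mc{E}(v^m)\ge\langle\nabla\mc{E}(v^m),u^*-v^m\rangle$ with the descent lemma to produce a telescoping sum of $\|v^m-u^*\|_2^2$, which forces it to carry the noise terms $\varepsilon^{m+1}$ explicitly through the telescope (bounding $\langle\varepsilon^{m+1},v^m-u^*\rangle$ via $\|v^m-u^*\|_2\le\tfrac{2}{\mu_a}\|\nabla\mc{E}(v^m)\|_2$, which is where the invariance ${v^m}^\top\mathbf{1}=0$ is really needed), then averages and bounds the initial distance $\|v^0-u^*\|_2^2$ by restricted strong convexity. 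You instead use the Polyak--{\L}ojasiewicz inequality $\|\nabla\mc{E}(v)\|_2^2\ge 2\mu_a(\mc{E}(v)-\mc{E}(u^*))$ (valid for all $v$ since the null component of $v-u^*$ drops out of both sides, exactly as you argue) together with the already-proved summed gradient bound \eqref{grad bound} and monotonicity; all the noise handling is then delegated to Lemma \ref{lemma: sufficient descent} and never reappears. Your route is shorter, gives the marginally cleaner constant $O(\tfrac{\lambda_1^2}{\lambda_0}\tfrac{n^2}{\epsilon})$ without the paper's additive $\lambda_1$ term (which arises from its extra $\tfrac{2c}{\Delta t}(c\Delta t+\cdots)$ noise contributions), and as you note could even be upgraded to a $\log(1/\epsilon)$ layer count via the standard PL contraction --- a strengthening the paper forgoes even though it already invokes the spectral gap $\mu_a$ elsewhere. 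Your flagged caveat about the $h^d$ normalization of $\nabla\mc{E}$ versus $L_a$ is a real inconsistency in the paper's own bookkeeping, and your observation that it cancels in the ratio $\lambda_a/\mu_a$ resolves it correctly.
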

\begin{proof}
	Since by convexity
	\begin{eqnarray}
	\mc{E}(u^*) - \mc{E}(v^m)&\geq &\langle \nabla \mc{E}(v^m), u^* -  v^m \rangle, 
	\end{eqnarray}
	along with Lemma \ref{lemma: sufficient descent}, 
	\begin{eqnarray}
	\mc{E}(v^{m+1}) &\leq& \mc{E}(u^*) + \langle \nabla \mc{E}(v^m),  v^m - u^* \rangle  - \frac{\Delta t}{2}  \|\nabla \mc{E}(v^m)\|_2^2\cr
	&=& \mc{E}(u^*) + \frac{1}{2\Delta t}\big(2\Delta t\langle \nabla \mc{E}(v^m),  v^m - u^* \rangle  - \Delta t^2  \|\nabla \mc{E}(v^m)\|_2^2 \cr 
	&\ &+  \| v^m - u^* \|_2^2 -  \| v^m - u^* \|_2^2\big) \cr
	&=& \mc{E}(u^*) + \frac{1}{2\Delta t} ( \| v^m - u^* \|_2^2 - \| v^m - \Delta t \nabla \mc{E}(v^m)-u^*\|_2^2)\cr
	&=&   \mc{E}(u^*) +  \frac{1}{2\Delta t} ( \| v^m - u^* \|_2^2   - \|v^{m+1} - \Delta t \varepsilon^{m+1}-u^*\|_2^2)\cr
	&=&  \mc{E}(u^*) +  \frac{1}{2\Delta t} ( \| v^m - u^* \|_2^2 - \| v^{m+1}- u^* \|_2^2 \cr 
	&\ &+ 2\Delta t \langle \varepsilon^{m+1}, v^{m+1}-u^*\rangle -\Delta t^2 \|\varepsilon^{m+1}\|_2^2)\cr
	&=& \mc{E}(u^*) +  \frac{1}{2\Delta t} \big( \| v^m - u^* \|_2^2 - \| v^{m+1}- u^* \|_2^2 + \Delta t^2\| \varepsilon^{m+1}\|_2^2 \cr 
	&\ &+ 2\Delta t \langle \varepsilon^{m+1}, v^m-u^*\rangle - 2\Delta t \langle \varepsilon^{m+1}, \nabla \mc{E}(v^m)\rangle\big)\cr
	&\leq& \mc{E}(u^*) +  \frac{1}{2\Delta t} \big( \| v^m - u^* \|_2^2 - \| v^{m+1}- u^* \|_2^2 + \Delta t^2 \| \varepsilon^{m+1}\|_2^2  \cr 
	&\ &+  2\Delta t \| \varepsilon^{m+1}\|_2 (\|v^m-u^*\|_2 +\|\nabla \mc{E}(v^m)\|_2)\big)\cr
	&\leq& \mc{E}(u^*) + \frac{1}{2\Delta t} \big( \| v^m - u^* \|_2^2 - \| v^{m+1}- u^* \|_2^2 +  \Delta t^2 \| \varepsilon^{m+1}\|_2^2 \cr 
	&\ &+ 2\Delta t (1+ 2/\mu_a )\| \varepsilon^{m+1}\|_2 \| \nabla \mc{E}(v^m) \|_2\big)\cr
	&\leq& \mc{E}(u^*) + \frac{1}{2\Delta t} \big( \| v^m - u^* \|_2^2 - \| v^{m+1}- u^* \|_2^2 +  c^2 \Delta t^2  \| \nabla \mc{E}(v^m) \|_2^2 \cr 
	&\ &+ 2c (1+ 2/\mu_a ) \Delta t \| \nabla \mc{E}(v^m) \|_2^2\big).\label{bound 1}
	\end{eqnarray}
	The last second inequality follows from \eqref{assumptions}, which implies $ \| L_a u\|_2 \geq
	\mu_a \|u\|_2$ if $u^\top \mathbf{1} = 0$. More precisely, the fact that $v^0 =0, \nabla
	\mc{E}(u)^\top \mathbf{1} = 0$ (follows from the form of $L_a$ and $b_a$ defined in
	\eqref{discrete elliptic}), and ${\varepsilon^m}^\top \mathbf{1} = 0\ \forall m$ (due to the
	assumption in \eqref{assumptions 2}) implies ${v^m}^\top \mathbf{1} = 0$, hence $\frac{\mu_a}{2}
	\| v^m-u^*\|_2 \leq \| \nabla \mc{E}(v^m) - \nabla \mc{E}(u^*) \|_2 = \| \nabla \mc{E}(v^m)
	\|_2$.  Reorganizing \eqref{bound 1} we get
	\begin{multline}
	\mc{E}(v^{m+1}) - \mc{E}(u^*) \cr
	\leq \frac{1}{2\Delta t} \bigg( \| v^m - u^* \|_2^2 - \| v^{m+1}- u^* \|_2^2 +  c\Delta t\big(c\Delta t + 2(1+\frac{2}{\mu_a})\big)\|\nabla \mc{E}(v^m)\|_2^2\bigg).
	\end{multline}
	Summing both left and right hand sides results in
	\begin{multline}
	\mc{E}(v^{M}) - \mc{E}(u^*) \leq \frac{1}{M} \sum_{m=0}^{M-1} \mc{E}(v^{m+1}) - \mc{E}(u^*)\cr 
	\leq	\frac{1}{M}\bigg[\bigg(\frac{\| v^0 - u^*\|^2_2}{2\Delta t} \bigg)+ \frac{2c}{\Delta t}\bigg(c\Delta t + 2(1+\frac{2}{\mu_a})\bigg)(\mc{E}(v^0)-\mc{E}(u^*))\bigg]
	\end{multline}
	where the second inequality follows from \eqref{grad bound}.  In order to derive a bound for
	$\|v^0 - u^*\|_2^2$, we appeal to strong convexity property of $\mc{E}(u)$:
	\begin{equation}
	\mc{E}(v^0) - \mc{E}(u^*) \geq \langle \nabla \mc{E}(u^*),v^0-u^*\rangle+\frac{\mu_a}{2} \| v^0-u^*\|_2^2 = \frac{\mu_a}{2} \| v^0-u^*\|_2^2 
	\end{equation}
	for $\langle \mathbf{1}, v^0-u^*\rangle=0$. The last equality follows from the optimality of $u^*$. Then 
	\begin{eqnarray}
	\mc{E}(v^{M}) - \mc{E}(u^*)  \leq	\frac{1}{M}\bigg[\bigg(\frac{1}{\mu_a\Delta t} \bigg)+ \frac{2c}{\Delta t}\bigg(c\Delta t + 2(1+\frac{2}{\mu_a})\bigg)\bigg](\mc{E}(v^0)-\mc{E}(u^*)).
	\end{eqnarray}
	Since $\mc{E}(v^0) = h^d\frac{a^\top \mathbf{1}}{2}=O(\lambda_1)$, along with
	$\lambda_a=O(\lambda_1 h^{d-2})$ and $\mu_a=\Omega(\lambda_0 h^d)$ (Assumption
	\ref{assumption: spectral bound}), we establish the claim.
\end{proof}

\end{document}